\documentclass[reqno,11pt]{amsart}

\setlength{\textheight}{8.1in}
\setlength{\oddsidemargin}{0.6cm}
\setlength{\evensidemargin}{0.6cm}
\setlength{\textwidth}{5.8in}

\usepackage{amsfonts}
\usepackage{amssymb, latexsym, amsmath, pb-diagram}
\usepackage{txfonts}
\usepackage{dsfont}
\usepackage{pstricks-add}
\usepackage{lamsarrow, pb-lams}
\usepackage{multicol}
\usepackage{bm}
\usepackage{longtable}
\usepackage[mathscr]{eucal}
\usepackage{epic,eepic}
\usepackage{xy}
\usepackage{graphicx}
\usepackage{epsfig}
\usepackage{epstopdf}
\xyoption{all}

\newtheorem{thm}{Theorem}[section]
\newtheorem{prop}{Proposition}[section]
\newtheorem{lem}{Lemma}[section]
\newtheorem{cor}{Corollary}[section]
\newtheorem{defn}{Definition}[section]

\newtheorem{rem}[thm]{Remark}

\begin{document}
\title[Topology of moment-angle manifolds]{The topology of the moment-angle manifolds\\----- on a conjecture of S.Gitler and S.L\'{o}pez}
\author[L.~Chen, F.~Fan \& X.~Wang]{Liman Chen, Feifei Fan and Xiangjun Wang}
\thanks{The authors are supported by NSFC grant No. 11261062 and SRFDP No.20120031110025}
\address{Liman Chen, School of Mathematical Sciences and LPMC, Nankai University, Tianjin 300071, P.~R.~China}
\email{chenlimanstar1@163.com}
\address{Feifei Fan, School of Mathematical Science and LPMC, Nankai University, Tianjin 300071, P.~R.~China}
\email{fanfeifei@mail.nankai.edu.cn}
\address{Xiangjun Wang, School of Mathematical Science and LPMC, Nankai University, Tianjin 300071, P.~R.~China}
\email{xjwang@nankai.edu.cn}
\date{}
\subjclass[2000]{Primary 22E46, 53C30}
\keywords{moment-angle manifolds, simple polytope, isotopy, regular embedding}
\maketitle

\begin{abstract}
In this paper, we study the topology of the moment-angle manifolds and prove a conjecture of S. Gitler and S. L\'{o}pez concerned with the behavior of the moment-angle manifold under the surgery `cutting off vertices' on a simple polytope.
Let $P$ be a simple polytope of dimension $n$ with $m$ facets and $P_{v}$ be a polytope obtained from
$P$ by cutting off one vertex $v$. Let $\mathcal {Z}=\mathcal {Z}(P)$ and $\mathcal {Z}_{v}=\mathcal {Z}(P_{v})$ be the corresponding moment-angle manifolds.
In \cite{[GL]} S. Gitler and S. L\'{o}pez
conjectured that: $\mathcal {Z}_{v}$ is diffeomorphic to
$\partial[(\mathcal {Z}-D^{n+m})\times D^{2}]\# \mathop{\#} \limits_{j=1}^{m-n} \binom{m-n}{j} (S^{j+2}\times S^{m+n-j-1})$,
and they have proved the conjecture in the case $m<3n$. In this paper we prove the conjecture in general case.
\end{abstract}

\section{Introduction}
\subsection{Background}
 The moment-angle manifold $\mathcal {Z}$ comes from two different ways:
\begin{enumerate}
\item The transverse intersections in $\mathds{C}^{n}$ of real quadrics of the form
$\sum \limits _{i=1}^{n}a_{i}|z_{i}|^{2}=0$ with the unit euclidean sphere of $\mathds{C}^{n}$.
\item An abstract construction from a simple polytope $P^{n}$ with $m$-facets (or a complex $K$).
\end{enumerate}

    The study of the first one led to the discovery of a new special class of compact
non-k\"{a}hler complex manifolds in the work of Lopez, Verjovsky and Meersseman (\cite{[LV]},\cite{[Me]},\cite{[MV]}), now known as the LV-M manifolds,
which helps us understand the topology of non-k\"{a}hler complex manifolds.

   The study of the second one is related to the quasitoric manifolds in the following way:
for every quasitoric manifold $\pi: M^{2n}\rightarrow P^{n}$, there is a principal
$T^{m-n}$-bundle $\mathcal {Z}\rightarrow M^{2n}$ whose composite map with $\pi$ makes $\mathcal {Z}$ a $T^{m}$-manifold with orbit space $P^{n}$.
The topology of the manifolds $\mathcal {Z}$ provids an effective tool for understanding inter-relations between algebraic
and combinatorial aspects such as the Stanley-Reisner rings, the subspace arrangements and the cubical complexs etc.(see \cite{[BP]}).

Following \cite{[BP]}, let $P^{n}$ be an n-dimensional simple polytope with m facets, and let $\mathcal{F}=\{F_{1},\ldots,F_{m}\}$ be the set of facets of $P^{n}$. For each facet $F_{i}\in\mathcal{F}$ denote by $T_{F_{i}}$ the one-dimensional coordinate subgroup of $T^{\mathcal{F}}\cong T^{m}$ corresponding to $F_{i}$. Then assign to every face $G$ the coordinate subtorus
$$T_{G}=\mathop{\Pi} \limits_{F_{i}\supset G}T_{F_{i}}\subset T^{\mathcal{F}}.$$
Note that dim$T_{G}$=codim $G$. Recall that for every point $q\in P^{n}$ we denote by $G(q)$ the unique face containing $q$ in the relative interior.

\begin{defn}
Define the moment-angle complex corresponding to $P^{n}$ as:
$$\mathcal {Z}(P)=(T^{\mathcal{F}}\times P^{n})/\sim,$$
where $(t_{1},p)\sim(t_{2},q)$ if and only if $p=q$ and $t_{1}t_{2}^{-1}\in T_{G(q)}$.
\end{defn}

From \cite{[BP]}, $\mathcal {Z}(P)$ is a smooth manifold.

However, according to Buchstaber-Panov\cite{[BP]}, there is another way to define the moment-angle complex corresponding to a simplicial complex and we will use this definition in this paper:
\begin{defn}
Let $K$ be a simplicial complex. we use $[m]=\{0,1,\ldots,m-1\}$ to represent the $m$ vertices
of the simplicial complex. Let $\sigma$ be a simplex in the complex $K$ and we use $|\sigma|$ to denote the number of the vertices of $\sigma$. Define
\[
D^{2|\sigma|}_{\sigma}\times T^{m-|\sigma|}_{\hat{\sigma}}=\{(z_{1}, z_2, \cdots, z_{m})\in (D^{2})^{m}: |z_{j}|=1 \; for \; j\notin \sigma\}.
\]
and define $\mathcal {Z}_{K}$ corresponding to $K$ as
\[
\mathcal {Z}_{K}=\bigcup \limits _{\sigma \in K}D^{2|\sigma|}_{\sigma}\times T^{m-|\sigma|}_{\hat{\sigma}}\subset (D^{2})^{m}.
\]
\end{defn}
Given a simple polytope $P$, then the dual of the boundary of $P$ is a simplicial complex $K_{P}$, which is defined to be a polytopal sphere. So we have two complexes: $\mathcal {Z}(P)$ and $\mathcal {Z}_{K_{P}}$. However, from \cite{[BP]}, $\mathcal {Z}_{K_{P}}$ is homeomorphic to $\mathcal {Z}(P)$, $\mathcal {Z}_{K_{P}}$ can be endowed with a smooth structure and $D^{2|\sigma|}_{\sigma}\times T^{m-|\sigma|}_{\hat{\sigma}}$ is a smooth tubular neighborhood of $\{0\}\times T_{\hat{\sigma}}$ in the smooth manifold $\mathcal {Z}_{K_{P}}$.

A fundamental problem in toric topology is to study the topology of the moment-angle manifolds $\mathcal {Z}_{K}$ corresponding to simplicial spheres $K$.
One way to do this is to consider the change of the moment-angle manifold $\mathcal {Z}$ after taking
some surgeries on the complex $K$. These surgeries include bistellar moves, cutting off faces, connected sum with other simplicial complexes , etc.\cite{[BP]}.
Obviously if we can precisely describe the behavior of the moment-angle manifold $\mathcal {Z}$ under some surgery on $K$, we will
get a better understanding of of the moment-angle manifolds. However, when studying the topology of the moment-angle manifolds corresponding to $PL$ spheres, we can start from one $PL$ sphere $K_{1}$ and make a sequence of surgeries on it to get another $PL$ sphere $K_{2}$, if we can clearly know the change of the topology of the corresponding moment-angle manifolds during these surgeries, we may construct the moment-angle manifold $\mathcal {Z}_{K_{2}}$ by making some simple `surgeries' on $\mathcal {Z}_{K_{1}}$.
Now we give the definition of bistellar $k-$move and consider the change of the topology of the moment-angle manifold after taking a bistellar $k-$move on $K$:
\begin{defn}
Let $K$ be an $(n-1)$ dimensional pure simplicial complex on the vertex set $[m]$, and let $\sigma \in K$ be an $(n-1-k)-$simplex $(1\leq k\leq n-2)$ such that $link_{K} \sigma$ is the boundary $\partial\tau$ of a $k-$simplex $\tau$ that is not a simplex of $K$. Then the operation $\chi_{\sigma}$ on $K$ defined by:
$$ \chi_{\sigma}(K):= (K-\sigma\ast\partial\tau)\cup (\partial\sigma\ast\tau)$$
is called a bistellar $k-$move. Obviously, a bistellar $0-$move is just the connected sum with the boundary of a $n$-simplex.
\end{defn}

However, there is a surgery called `cutting off vertex' on the simple polytope dual to a bistellar $0-$move on the polytopal sphere:
\begin{defn}
Let $P$ be a simple polytope of dimension $n$ with $m$ facets, which is the convex hull of finitely many vertices in $\mathds{R}^{n}$. For the vertex $v$, we can find a hyperplane $H(x) = \mathop{\sum} \limits_{i=1}^{n}a_{i}x_{i} = b$ satisfying that $H(v)>b$ \;for $v$ and $H(w)<b$ \;for every vertex $w\neq v$. The set $P\cap\{x|\;H(x)\leq b\}$ is a new simple polytope $P_{v}$, which is called to be obtained from $P$ by
cutting off vertex $v$.
\end{defn}
Let $K_{P}$ and $K_{P_{v}}$ be the duals of the boundary of $P$ and $P_{v}$, $\sigma$ be the maximal simplex in $K_{P}$ dual to the vertex $v$ of the simple polytope $P$. Then we have $K_{P_{v}}=K_{P}\#_{\sigma}\partial\triangle^{n}$ ($\triangle^{n}$ is the standard n-dimensional simplex, the choice of a maximal simplex in $\partial\triangle^{n}$ is irrelevant). From this, the surgery `cutting off vertex' on the simple polytope corresponds to the bistellar $0-$move on the dual polytopal sphere.

Two pure simplicial complexes are bistellarly equivalent if one is taken to another by a finite sequence of bistellar moves. It is easy to see that two bistellarly equivalent $PL$ manifolds are $PL$ homeomorphic. The following remarkable result shows that the converse is also true.

\begin{thm}
Two $PL$ manifolds are bistellarly equivalent if and only if they are $PL$ homeomorphic.
\end{thm}

We can also define a bistellar $k-$move on simple polytope by taking a bistellar $k-$move on the dual simplicial sphere. Then we have a similar theorem of Ewald:
\begin{thm}
Let $P$ be a simple polytope of dimension $n\geq3$. Then there is a sequence of simple polytopes $P_{1},\ldots,P_{m}$ such that $P_{1}=\triangle^{n}$ and $P_{m}=P$ and for $i=1,\ldots,m-1$, $K_{P_{i+1}}$ is obtained from $K_{P_{i}}$ by a bistellar $k-$move with $0\leq k\leq n-2$.
\end{thm}

So from these theorems, to study the topology of the moment-angle manifolds, we can consider the change of the topology of the moment-angle manifolds after taking some bistellar moves on the simplicial spheres or simple polytopes. while taking a bistellar $k-$move on simplicial sphere $K$, the moment-angle complexes corresponding to $\sigma\ast\partial\tau$ and $\partial\sigma\ast\tau$ are $D^{2(n-k)}_{\sigma}\times S^{2k+1}_{\tau}$ and $S^{2(n-k)-1}_{\sigma}\times D^{2(k+1)}_{\tau}$ respectively. By definition of moment-angle complex, we obtain
$$\mathcal {Z}_{\chi_{\sigma}(K)} = (\mathcal {Z}_{K}- T^{m-n-1}\times D_{\sigma}^{2(n-k)}\times S^{2k+1}_{\tau})\cup (T^{m-n-1}\times S^{2(n-k)-1}_{\sigma}\times D^{2(k+1)}_{\tau}),$$
where $T^{m-n-1}\times S^{2(n-k)-1}_{\sigma}\times D^{2(k+1)}_{\tau}$ is attached along boundary $T^{m-n-1}\times S_{\sigma}^{2(n-k)-1}\times S^{2k+1}_{\tau}$.
From this expression, we can't get too much information about the topology of $\mathcal {Z}_{\chi_{\sigma}(K)}$, we can't even calculate the cohomology of $\mathcal {Z}_{\chi_{\sigma}(K)}$ in general case. The topology of moment-angle manifolds is so complicated that we can't know clearly the change of the moment-angle manifolds during any bistellar move. But based on the following reasons, maybe we can precisely describe the behavior of the moment-angle manifold after taking a bistellar $0$- move on a simplicial sphere $K$ $($or cutting off vertex $v$ on a simple polytope $P$$)$:
\begin{enumerate}
\item  From  \cite{[BM]}, we know that moment-angle manifold $\mathcal {Z}(P_{v})$ only depends on the topology of the moment-angle manifold $\mathcal {Z}$, independent of the vertex $v$.
\item  we have the following theorem:
\begin{thm}[\cite{[Mc]}]
Let $P$ be a simple polytope obtained from the $k$-simplex by cutting off vertices for $l$ times.
Then the corresponding moment-angle manifold $\mathcal {Z}(P)$ is diffeomorphic to a connected sum of sphere products
\[\mathcal {Z}(P)\cong \overset{l}{\underset{j=1}\#}j\binom{l+1}{j+1}S^{j+2}\times S^{2k+l-j-1}.\]
\end{thm}
\item  S. Gitler and S.~L\'{o}pez \cite{[GL]} prove that when in the case of $m<3n$, $\mathcal {Z}(P_{v})$ is  diffeomorphic to $\partial\left[\left(\mathcal {Z}-D^{n+m}\right)\times D^{2}\right]\# \mathop{\#} \limits_{j=1}^{m-n} \binom{m-n}{j} \left(S^{j+2}\times S^{m+n-j-1}\right)$.
\item  we can calculate the cohomology of $\mathcal {Z}_{K\#\partial\triangle}$ using the method of Taylor resolution:
\begin{thm}[\cite{[FW]}]
The cohomology of the moment-angle manifold $\mathcal {Z}_{K\#\partial\triangle}$ is isomorphic to the cohomology of the manifold $\partial\left[\left(\mathcal {Z}-D^{n+m}\right)\times D^{2}\right]\# \mathop{\#} \limits_{j=1}^{m-n} \binom{m-n}{j} \left(S^{j+2}\times S^{m+n-j-1}\right)$.
\end{thm}
\end{enumerate}

Based on these reasons, S. Gitler and S.~L\'{o}pez conjectured that $\mathcal {Z}(P_{v})$ is diffeomorphic to
\[
\partial\left[\left(\mathcal {Z}-D^{n+m}\right)\times D^{2}\right]\# \mathop{\#} \limits_{j=1}^{m-n} \binom{m-n}{j} \left(S^{j+2}\times S^{m+n-j-1}\right)
\]

In this paper, we will prove the conjecture by constructing an isotopy of a submanifold in the moment-angle manifold.

\subsection{Main Idea of S.~Gitler and S.~L\'{o}pez}

In the case $m<3n$ (\cite{[GL]}), S.~Gitler and S.~L\'{o}pez firstly proved that $T^{m-n}_{\hat{\sigma}}\times \{0\}$ can be contracted to a point in $\mathcal {Z}$. Since $m<3n$, it is isotopic to a $m-n$-torus inside an
open disk in $\mathcal {Z}$. therefore,
\[
\mathcal {Z}-T_{\widehat{\sigma}}^{m-n}\times D_{\sigma}^{2n}\cong \mathcal {Z}-T^{m-n}\times D^{2n}\cong (\mathcal {Z}-D^{n+m})\cup (D^{n+m}-T^{m-n}\times D^{2n})
\]
and
\[\mathcal {Z}(P_{v})\simeq\partial[(\mathcal {Z}-D^{n+m})\times D^{2}]\# \partial[(S^{m+n}-T^{m-n}\times D^{2n})\times D^{2}].\]
Then they considered the manifold $\partial[(S^{m+n}-T^{m-n}\times D^{2n})\times D^{2}]$, where the $m-n$-torus embeds in $S^{m+n}$ by $S^{1}\times\cdots \times S^{1}\subseteq D^{2}\times\cdots \times D^{2}= D^{2(m-n)}\times \{0\}\subseteq D^{m+n}\subseteq S^{m+n}$.
They constructed spheres represent the Alexander dual homology of $T^{m-n}$ in
$S^{m+n}$. According to a corollary of $h$-cobordism theorem, they proved that
\[\partial[(S^{m+n}-T^{m-n}\times D^{2n})\times D^{2}]\simeq\mathop{\#} \limits_{j=1}^{m-n} \binom{m-n}{j} (S^{j+2}\times S^{m+n-j-1}).\]
So the conjecture is true in this case.

In this paper we will prove the conjecture in general case and now we state it as a theorem:

\begin{thm}
let $P$ be a simple polytope of dimension $n$ with $m$ facets and $P_{v}$ be a polytope obtained from
$P$ by cutting off one vertex $v$. Let $\mathcal {Z}=\mathcal {Z}(P)$ and $\mathcal {Z}_{v}=\mathcal {Z}(P_{v})$ be the corresponding moment-angle manifolds,
then $\mathcal {Z}_{v}$ is diffeomorphic to
\[
\partial[(\mathcal {Z}-D^{n+m})\times D^{2}]\# \mathop{\#} \limits_{j=1}^{m-n} \binom{m-n}{j} (S^{j+2}\times S^{m+n-j-1}).
\]
\end{thm}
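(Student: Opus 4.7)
My plan follows the two-step outline indicated in the introduction, with the isotopy construction being the essential new ingredient beyond the Gitler--L\'opez argument.

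\textbf{Step 1: the isotopy into a regular embedding.} First I would analyze the standard embedding $T_{\hat\sigma}^{m-n} \times D_\sigma^{2n} \hookrightarrow Z$ coming from the moment-angle construction. The torus $T_{\hat\sigma}^{m-n}$ sits inside the product $(D^2)^m$ as a coordinate subtorus. The goal is to isotope it, within $Z$, to the \emph{regular} (unknotted) embedding $T^{m-n} \subseteq D^{m-n+1} \subseteq D^{m+n} \subseteq Z$, where $D^{m+n}$ is a small disk neighborhood of some interior point of $Z$. I would produce this isotopy in two substeps. First I would use the polytopal structure of $P$ to construct a chain of simple polytopes $P = P^{(0)}, P^{(1)}, \dots, P^{(N)}$ each obtained from the previous by an elementary modification (such as cutting off a face or a bistellar-type move), reducing to a case where the dual simplex $\sigma$ is contained in a disk neighborhood. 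At each step the corresponding moment-angle manifolds are related by surgeries in which one can track the image of $T_{\hat\sigma}^{m-n}$ concretely. Alternatively, and probably more cleanly, I would exhibit an explicit ambient isotopy inside $Z$ by starting from the fact that for any vertex $v$ the subset $T_{\hat\sigma}^{m-n}\times D_\sigma^{2n}$ is a framed neighborhood, and then produce a path of framed embeddings connecting it to a tubular neighborhood of the regular $T^{m-n}$. Since $Z$ is simply connected (as $P$ is simple, $Z$ is $2$-connected), the homotopy class of the embedding is unobstructed, and one shows isotopy by checking the framing matches.

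\textbf{Step 2: decomposition via isotopy extension.} Once Step 1 is established, the isotopy extension theorem (as used in \cite{[GL]}) gives an ambient diffeomorphism of $Z$ carrying $T_{\hat\sigma}^{m-n}\times int(D_\sigma^{2n})$ to $T^{m-n}\times int(D^{2n})$ sitting inside a standard $D^{m+n}\subseteq Z$. Then the complement splits as
\[
Z - T_{\hat\sigma}^{m-n}\times int(D_\sigma^{2n}) \;\cong\; \bigl(Z - int(D^{m+n})\bigr)\,\cup\,\bigl(D^{m+n} - T^{m-n}\times int(D^{2n})\bigr),
\]
glued along $S^{m+n-1}$. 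Taking $\partial(-\times D^2)$ and applying the standard fact that $\partial(X\cup_{S^{m+n-1}} Y)\times D^2$ is diffeomorphic to $\partial(X\times D^2)\sharp\partial(Y\times D^2)$ gives
\[
Z_v \;\cong\; \partial\bigl[(Z-int(D^{n+m}))\times D^2\bigr]\,\sharp\,\partial\bigl[(S^{m+n}-T^{m-n}\times int(D^{2n}))\times D^2\bigr],
\]
using that $\partial(D^{m+n}\times D^2) = S^{m+n+1}$ caps off to give the sphere $S^{m+n}$ on the right factor.

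\textbf{Step 3: identification of the second summand.} Here I would invoke Lemma 2.13 of \cite{[Mc]}, which identifies boundaries of products with $D^2$ of complements of regularly embedded tori inside spheres. Applied to the regular embedding $T^{m-n}\subseteq S^{m+n}$ with its standard normal disk bundle $T^{m-n}\times D^{2n}$, this lemma yields directly
\[
\partial\bigl[(S^{m+n}-T^{m-n}\times int(D^{2n}))\times D^2\bigr]\;\cong\;\mathop{\sharp}\limits_{j=1}^{m-n}\binom{m-n}{j}(S^{j+2}\times S^{m+n-j-1}),
\]
the exponents $\binom{m-n}{j}$ arising from the Alexander duals of the $j$-dimensional subtori of $T^{m-n}$. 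Combining with Step~2 yields the theorem.

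\textbf{Main obstacle.} The substantive difficulty lies entirely in Step~1: producing the isotopy to the regular embedding without the dimensional slack $m<3n$ used by Gitler--L\'opez (which let them apply general position for embeddings of the torus in a disk). In the general case, $T_{\hat\sigma}^{m-n}$ may have dimension comparable to $\dim Z/2$, so transversality alone does not move it into a disk. The heart of the argument will therefore be the explicit combinatorial/geometric construction of the isotopy inside $Z$, using the polytopal structure of $P$ to exhibit a concrete path of embeddings and verify that the framings agree.
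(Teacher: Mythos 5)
There is a genuine gap, and it sits exactly where you place your ``main obstacle'': the construction of the isotopy of $T^{m-n}_{\hat\sigma}$ to the regular embedding is the entire new content of the theorem beyond \cite{[GL]}, and your proposal defers it rather than supplying it. Neither of your two suggested routes works as stated. The chain-of-polytopes idea is not developed at all, and the fallback argument --- that $Z$ is $2$-connected, so ``the homotopy class of the embedding is unobstructed, and one shows isotopy by checking the framing matches'' --- is precisely the kind of general-position/homotopy-implies-isotopy reasoning that fails here: the torus has dimension $m-n$ in the $(m+n)$-manifold $Z$, and it is exactly the absence of a dimensional hypothesis such as $m<3n$ that prevents one from upgrading a homotopy (or a matching of framings) to an ambient isotopy. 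The paper's actual argument avoids all general position: it is a concrete inductive coordinate construction. Since every vertex $p+1$ of $K_P$ lies in some maximal simplex $\sigma_{p+1}$, the subset $S^1_0\times S^1_1\times\cdots\times S^1_p\times D^2_{p+1}\times\{\pi(x)\}\times\{*\}$ lies inside $(D^2)_{\sigma_{p+1}}\times T_{\hat\sigma_{p+1}}\subseteq Z$, so one has a genuine $D^2$-factor available in the $(p+1)$-st coordinate; an explicit formula (the isotopy $F(\alpha,t)$ of Lemma~2.1 of the paper) then moves $T^p\times S^1_{p+1}\subset D^{p+1}\times D^2$ onto a standard torus $T^{p+1}\subset D^{p+2}$. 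Iterating over the $m-n$ circle factors, one circle at a time, lands the torus in a regular embedding $T^{m-n}\subseteq D^{m-n+1}\subseteq D^{m+n}\subseteq Z$ with no restriction on $m$ and $n$. Without this (or some equivalent) construction, your Step 1 is an assertion, not a proof, and Steps 2--3 have nothing to stand on.

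A secondary inaccuracy is in your Step 3. Lemma 2.13 of \cite{[Mc]} is not a statement about complements of regularly embedded tori in spheres; it identifies the adjacent connected sum $D^{6}\times T^{n-5}\sharp_{T^{n-4}}S^{1}\times D^{n}$ with $\sharp_{j}\binom{n-5}{j}S^{j+2}\times D^{n-j-1}$. To use it, the paper first rewrites $\partial[(S^{m+n}-T^{m-n}\times int(D^{2n}))\times D^{2}]$ as $\partial(D^{2n+2}\times T^{m-n}\sharp_{T^{m-n+1}}S^{1}\times D^{m+n+1})$ via the $\sharp_X$ construction, and then peels off a $D^{2n-4}$ factor to reach $\partial[(D^{6}\times T^{m-n}\sharp_{T^{m-n+1}}S^{1}\times D^{m-n+5})\times D^{2n-4}]$, to which McGavran's lemma applies. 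Your claim that the lemma ``yields directly'' the identification (with a nod to Alexander duality, which is the \cite{[GL]} route, valid only under their dimensional restriction) skips this nontrivial reduction; it should either be carried out or replaced by a correct citation of an argument that covers all $m,n$.
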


\subsection{Main Idea of the Proof}
In section 2, we construct an isotopy of $T_{\hat{\sigma}}^{m-n}$ in $\mathcal {Z}$ to move it to the regular embedding
$T^{m-n}\subseteq D^{m-n+1}\subseteq D^{m+n}\subseteq \mathcal {Z}$, thus we prove the following:

\begin{prop}
$\mathcal {Z}_{v}$ is diffeomorphic to
\[
\partial[(\mathcal {Z}-D^{n+m})\times D^{2}]\mathop{\#}\partial[(S^{m+n}-T^{m-n}\times D^{2n})\times D^{2}],
\]
where $T^{m-n}\times D^{2n}$ is the regular embedding in $S^{m+n}$.
\end{prop}

 Then in section 3, we prove the following by induction:
\begin{prop}
$\partial[(S^{m+n}-T^{m-n}\times D^{2n})\times D^{2}]$ is diffeomorphic to
\[
 \mathop{\#} \limits_{j=1}^{m-n} \binom{m-n}{j} (S^{j+2}\times S^{m+n-j-1}),
\]
where $T^{m-n}\times D^{2n}$ is the regular embedding in $S^{m+n}$.
\end{prop}

Combining these two propositions, Theorem 1.5 is proved.

 The manifold $\partial[(\mathcal {Z}-D^{m+n})\times D^{2}]$ is diffeomorphic to $(\mathcal {Z}\times S^1-D^{m+n}\times S^1)\cup S^{m+n-1}\times D^{2}$,
which can be obtained by taking a $(m+n,1)$-type surgery on the manifold $\mathcal {Z}\times S^1$ (see \cite{[M]}).
\begin{cor}
Let $[\mathcal {Z}]$ and $[S^1]$ be the fundamental classes of $\mathcal {Z}$ and $S^1$ respectively. Then the cohomology of
$\partial[(\mathcal {Z}-D^{m+n})\times D^{2}]$ is isomorphic to
\[
H^*(\partial[(\mathcal {Z}-D^{m+n})\times D^{2}])\cong H^*(\mathcal {Z})\otimes H^*(S^1)/\{1\otimes [S^1], [\mathcal {Z}]\otimes 1\}
\]
as a ring.
\end{cor}

\section{Construct the Isotopy of $T^{m-n}_{\hat{\sigma}}\times {0}$ in $\mathcal {Z}$}
\subsection{Some Notations}
After cutting off a vertex $v$ on the simple polytope $P$, we obtain a new simple polytope $P_{v}$. Let $K_{P}$ and
$K_{P_{v}}$ be the duals of the boundary of $P$ and
$P_{v}$, $\sigma$ be the maximal simplex in $K_{P}$ dual to the vertex $v$ of the simple polytope $P$.
Then we have $K_{P_{v}}=K_{P}\#_{\sigma}\partial\triangle^{n}=(K_{P}-\sigma)\cup (\partial\triangle^{n}-\triangle^{n-1})$ ($\triangle^{n}$ is the standard n-dimensional simplex, $\triangle^{n-1}$ is a maximal simplex of $\partial\triangle^{n}$,
 the choice of a maximal simplex in $\partial\triangle^{n}$ is irrelevant).
By the definition, the moment-angle complex corresponding to $P$ (or $K_{P}$) is:
\[
\mathcal {Z}=\bigcup \limits _{\sigma \in K_{P}}D^{2|\sigma|}_{\sigma}\times T_{\widehat{\sigma}}^{m-|\sigma|}\subset (D^{2})^{m}.
\]
Denote $\hat{K_{1}}:=K_{P}-\{\sigma\}$ and $\hat{K_{2}}:=\partial\triangle^{n}-\sigma$. Then the moment-angle complex corresponding to the complex $\hat{K_{1}}$ and $\hat{K_{2}}$ are:
$$\mathcal {Z}_{\hat{K_{1}}}= \mathcal {Z}_{K_{P}}-T^{m-n}_{[m]-\sigma}\times D^{2n}_{\sigma}, \qquad \mathcal {Z}_{\hat{K_{2}}}= \mathcal {Z}_{\partial\triangle^{n}}-S^{1}\times D^{2n}_{\sigma}=S^{2n+1}-S^{1}\times D^{2n}_{\sigma}= D^{2}\times S^{2n-1}_{\sigma}$$
Then we can express the moment-angle complex corresponding to $P_{v}$ (or $K_{P_{v}}$) as follows (see 6.4 in \cite{[BP]}):
\begin{align*}
\mathcal {Z}_{v}&=(\mathcal {Z}\times S^{1}-T^{m-n}_{\widehat{\sigma}}\times D^{2n}_{\sigma}\times S^{1})
\cup_{T^{m-n}_{\widehat{\sigma}}\times S^{2n-1}_{\sigma}
\times S^{1}}T^{m-n}_{\widehat{\sigma}}\times S^{2n-1}_{\sigma}\times D^{2}\\
&\simeq \partial[(\mathcal {Z}-T^{m-n}_{\widehat{\sigma}}\times D_{\sigma}^{2n})\times D^{2}]\\
\end{align*}

We use $ S^{1}_{i}$ and $D^{2}_{i}$ to represent the coordinates corresponding to the vertex $i$. So $D^{2|\sigma|}_{\sigma}\times T_{\widehat{\sigma}}^{m-|\sigma|}$ can be expressed as $\mathop{\Pi} \limits _{i\in\sigma}D^{2}_{i}\times \mathop{\Pi} \limits _{j\notin\sigma}S^{1}_{j}$. Via the morphism $F=e^{i\pi t}$, the open interval $(-1,1)$ is diffemorphic to $S^{1}-\{-1\}$ ($S^{1}$ can be viewed as the unit circle in the complex plane $\mathds{C}$), which is denoted by $A$ ($S^{1}_{i}-\{-1\}$ is denoted by $A_{i}$). Without loss of generality, assume that $\sigma=\{0,m-n+1,\ldots,m-1\}$ , so its complement $\hat{\sigma}$ represents vertex set $\{1,2,\ldots,m-n\}$, $*$ is a point of $S^{1}_{m-n+1}\times\ldots\times S^{1}_{m-1}$, $y$ is the point $(1,0)$ of $S^{1}_{0}$. Out of simplicity, we will omit $*$ and $y$ in expressions, for example, we use $S^{1}_{1}\times\cdots\times S^{1}_{m-n}$ to represent the submanifold $\{y\}\times S^{1}_{1}\times\cdots\times S^{1}_{m-n}\times\{*\}$.

In this section, we will construct an isotopy inductively to move the torus
$\{y\}\times S^{1}_{1}\times\cdots\times S^{1}_{m-n}\times\{*\}$ in $\mathcal {Z}$ to the regular embedding
$T^{m-n}\subseteq D^{m-n+1}\subseteq D^{m+n}\subseteq \mathcal {Z}$.
Now we give the definition of the regular embedding $T^{m-n}\subseteq D^{m-n+1}\subseteq D^{m+n}\subseteq \mathcal {Z}$.
\subsection{Regular Embedding and Standard Torus}
We construct the regular embedding of $T^{k}$ into $\mathds{R}^{k+1}$ as follows: $S^{1}\subseteq D^{2}\subseteq \mathds{R}^{2}$,
assume that we have constructed the embedding of $T^{i-1}$ into $D^{i}\subseteq \mathds{R}^{i}$. Represent $(i+1)$-sphere as
$S^{i+1}= D^{i}\times S^{1}\bigcup S^{i-1}\times D^{2}$. By the assumption, the torus $T^{i}= T^{i-1}\times S^{1}$ can be
embedded into $D^{i}\times S^{1}$ and therefore into $S^{i+1}$. Since $T^{i}$ is compact and $S^{i+1}$ is the
one-point compactification of $\mathds{R}^{i+1}$, we have $T^{i}\subseteq\mathds{R}^{i+1}$. Inductively, we can construct
the regular embedding of $T^{k}$ into $\mathds{R}^{k+1}$ (or $D^{k+1}$).
The regular embedding of $T^{k}$ into $\mathds{R}^{n}$ is $T^{k}\subseteq \mathds{R}^{k+1}\times\{0\}\subseteq \mathds{R}^{k+1}\times\mathds{R}^{n-k-1}$,
where $T^{k}\subseteq \mathds{R}^{k+1}\times\{0\}$ is the regular embedding of $T^{k}$ into $\mathds{R}^{k+1}$.

In terms of coordinates, we can express the regular embedding torus $T^{k}$ in $\mathds{R}^{k+1}$ inductively as:
\begin{align}
T^{k}=\left\{\left(
\begin{array}{cc}\sin\alpha_{1}\cdot(1+\frac{1}{2}\sin\alpha_{2}\cdot(1+\frac{1}{2}\sin\alpha_{3}\cdot(\cdots (1+\frac{1}{2}\sin\alpha_{k})\cdots)))\\
\cos\alpha_{1}\cdot(1+\frac{1}{2}\sin\alpha_{2}\cdot(1+\frac{1}{2}\sin\alpha_{3}\cdot(\cdots (1+\frac{1}{2}\sin\alpha_{k})\cdots)))\\
\frac{1}{2}\cos\alpha_{2}\cdot(1+\frac{1}{2}\sin\alpha_{3}\cdot(\cdots (1+\frac{1}{2}\sin\alpha_{k})\cdots))\\
\vdots\\
\frac{1}{2^{k-2}}\cos\alpha_{k-1}\cdot(1+\frac{1}{2}\sin\alpha_{k})\\
\frac{1}{2^{k-1}}\cos\alpha_{k}
 \end{array}
 \right)
 |0\leq\alpha_{i}<2\pi
 \right\}
\end{align}

We  call this the standard torus $T^{k}\subseteq \mathds{R}^{k+1}$.

We shall prove that the standard torus is the regular embedding $T^{k}\subseteq \mathds{R}^{k+1}$:

\begin{lem}
If $T^{k}\subseteq \mathds{R}^{k+1}\subseteq S^{k+1}$ is the standard torus, we can express it as the embedding $T^{k}=S^{1}\times T^{k-1}\subseteq
S^{1}\times D^{k}\subseteq S^{1}\times D^{k}\cup D^{2}\times S^{k-1}\subseteq S^{k+1}$, where $T^{k-1}$ is the standard torus in $D^{k}$.
\end{lem}

\begin{proof}
We can express $S^{1}\times D^{k}\approx(sin\alpha_{0},cos\alpha_{0})\times(r_{1},\ldots,r_{k})$ in $\mathds{R}^{k+1}$ as:
\begin{align}
((1+\frac{1}{2}r_{1})sin\alpha_{0},(1+\frac{1}{2}r_{1})cos\alpha_{0},\frac{1}{2}r_{2},\ldots,\frac{1}{2}r_{k}),
\end{align}
where $-(2-\epsilon)<r_{i}<2-\epsilon$, $\epsilon>0$ is sufficiently small. Obviously, $S^{1}\times D^{k}$ embeds in $D^{k+2}$ through $S^{1}\times D^{k}=\partial D^{2}\times D^{k}\subseteq \partial (D ^{2}\times D^{k})$, so it is the first component of $S^{k+1}\approx D^{k}\times S^{1} \cup S^{k-1}\times D^{2}\supseteq{R}^{k+1}$, where $S^{k+1}$ is the one-point compactification of $\mathds{R}^{k+1}$.
While $k=1$, the standard torus $S^{1}$ is the regular embedding $S^{1}\subseteq \mathds{R}^{2}$.
Inductively suppose that the regular embedded torus $T^{k-1}\subseteq D^{k}$ can be expressed by (1). Taking this expression into (2), the regular embedded torus $T^{k}=S^{1}\times T^{k-1}\subseteq S^{1}\times D^{k}\subseteq\mathds{R}^{k+1}$ can be also expressed by (1). So by induction, we prove that the standard torus is the regular embedding $T^{k}\subseteq \mathds{R}^{k+1}$.
\end{proof}
In the next subsection, we will construct an isotopy inductively to move the torus
$S^{1}_{1}\times\cdots\times S^{1}_{m-n}$ in $\mathcal {Z}$ to the regular embedding
$T^{m-n}\subseteq A^{m-n+1}=A_{0}\times A_{1}\times\cdots\times A_{m-n}\subseteq \mathcal {Z}$. Our main idea is:
first, we can choose a facet $\sigma_{1}$ containing the vertex $1$ and construct an equivariant isotropy of $S^{1}_{1}\times\cdots\times S^{1}_{m-n}$ in $D^{2n}_{\sigma_{1}}\times T_{\widehat{\sigma_{1}}}^{m-n}$, which moves the product component $ S^{1}_{1}$ to the regular embedding $S^{1}\subseteq A^{2} \subseteq S^{1}_{0}\times S^{1}_{1}$. Inductively suppose we have constructed an equivariant isotropy to move the product component $S^{1}_{1}\times S^{1}_{2}\times\cdots\times S^{1}_{p}$ to the regular embedding $T^{p}\subseteq A^{p+1}\subseteq S^{1}_{0}\times S^{1}_{1}\times\cdots\times S^{1}_{p}$. Then we choose a facet $\sigma_{p+1}$ containing the vertex $p+1$, by Lemma 2.2 below we can construct an equivariant isotropy of $T^{p}(\subseteq A^{p+1})\times S^{1}_{p+1}\times\cdots\times S^{1}_{m-n}$ in $D^{2n}_{\sigma_{p+1}}\times T_{\widehat{\sigma_{p+1}}}^{m-n}$, which moves the product component $T^{p}\times S^{1}_{p+1}$ to the regular embedding $T^{p+1}\subseteq A^{p+2}\subseteq S^{1}_{0}\times S^{1}_{1}\times\cdots\times S^{1}_{p+1}$. By induction, we construct an isotropy of $S^{1}_{1}\times\cdots\times S^{1}_{m-n}$ in $\mathcal {Z}$ to move it to the regular embedding $T^{m-n}\subseteq A^{m-n+1}\subseteq S^{1}_{0}\times S^{1}_{1}\times\cdots\times S^{1}_{m-n}$.

The isotropy $F:T^{k}\times I\longrightarrow D^{k+2}(k\geq2)$ we construct is defined by:
\begin{align}
F(\alpha,t)=\left( \begin{array}{cc}
\frac{1}{2}\sin\alpha_{1}\cdot(1+\frac{1}{2}\sin\alpha_{2}\cdot(1+\frac{1}{2}\sin\alpha_{3}\cdot(\cdots (1+\frac{1}{2}t\sin\alpha_{k})\cdots)))\\
\frac{1}{2}\cos\alpha_{1}\cdot(1+\frac{1}{2}\sin\alpha_{2}\cdot(1+\frac{1}{2}\sin\alpha_{3}\cdot(\cdots (1+\frac{1}{2}t\sin\alpha_{k})\cdots)))\\
\frac{1}{2^{2}}\cos\alpha_{2}\cdot(1+\frac{1}{2}\sin\alpha_{3}\cdot(\cdots (1+\frac{1}{2}t\sin\alpha_{k})\cdots))\\
\vdots\\
\frac{1}{2^{k-1}}\cos\alpha_{k-1}\cdot(1+\frac{1}{2}t\sin\alpha_{k})\\
\frac{1}{2^{k}}[(1-t)\cos\alpha_{k}+t\cos(\frac{1}{2^{k}}\pi\cos\alpha_{k})]\\
\frac{1}{2^{k}}[(1-t)\sin\alpha_{k}+t\sin(\frac{1}{2^{k}}\pi\cos\alpha_{k})]
 \end{array}
 \right).
\end{align}
An examination of this isotopy proves the following :
\begin{lem}
Let $T^{k}$ be the standard torus in $D^{k+1}=D^{k}\times D^{1}\subseteq D^{k}\times S^{1}\subseteq D^{k}\times D^{2}$.
Then we may write $D^{k+2}$ as $D^{k}\times D^{2}$ so that $T^{k}$ is isotopic to
$T^{k-1}\times(\partial D^{2})$, where $T^{k-1}$ is the standard torus in $D^{k}$.
\end{lem}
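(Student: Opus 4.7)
The plan is to verify directly, by endpoint evaluation and an injectivity check, that the explicit isotopy $F:T^{k}\times I\to D^{k+2}$ of (2.3) accomplishes the required deformation, using the decomposition $D^{k+2}=D^{k}\times D^{2}$ obtained by grouping the first $k$ ambient coordinates into $D^{k}$ and the last two into $D^{2}$.

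At $t=0$, every nested factor of the form $1+\tfrac{t}{2}\sin\alpha_{k}$ collapses to $1$, so the first $k$ coordinates of $F(\alpha,0)$ reproduce formula (2.2) verbatim as the standard embedding of $T^{k-1}$ in $D^{k}$ with parameters $\alpha_{1},\dots,\alpha_{k-1}$, while the last two coordinates become $(\tfrac{1}{2^{k-1}}\cos\alpha_{k},\tfrac{1}{2^{k-1}}\sin\alpha_{k})$, sweeping out $\partial D^{2}$ up to a harmless rescaling of the $D^{2}$-factor. Thus $F(\cdot,0)$ realizes $T^{k-1}\times\partial D^{2}\subset D^{k}\times D^{2}$. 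At $t=1$, the first $k+1$ coordinates coincide with formula (2.2) for the standard $T^{k}\subset D^{k+1}$, while the $(k+2)$-nd coordinate is the nonnegative bump $\tfrac{1}{2^{k-1}}|\sin\alpha_{k}|$. A second, obvious affine deformation of parameter $s\in[0,1]$ which linearly shrinks this last coordinate from $|\sin\alpha_{k}|/2^{k-1}$ down to $0$ therefore carries $F(\cdot,1)$ onto the standard $T^{k}$ sitting inside $D^{k+1}\times\{0\}\subset D^{k+2}$; this family stays injective throughout because the first $k+1$ coordinates alone already give an embedding by (2.2).

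The substantive verification is that $F_{t}$ is an embedding for every $t\in[0,1]$ and that its image is contained in $D^{k+2}$. Both facts follow from the telescoping structure of (2.3): define inductively $A_{i}^{(t)}=1+\tfrac{1}{2}\sin\alpha_{i}\cdot A_{i+1}^{(t)}$ with $A_{k}^{(t)}=1+\tfrac{t}{2}\sin\alpha_{k}$; a straightforward geometric-series estimate gives $0<A_{i}^{(t)}<2$ uniformly, which both places $F(\alpha,t)$ inside $D^{k+2}$ and lets one recover $\alpha_{i}$ and $A_{i+1}^{(t)}$ iteratively from each outer pair of coordinates (the squared sum of the pair equals $(A_{i}^{(t)})^{2}$ up to scaling). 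The remaining parameter $\alpha_{k}$ is recovered from $\cos\alpha_{k}$ together with the sign of $\sin\alpha_{k}$, which is encoded by the $(k+2)$-nd coordinate when $t$ is close to $0$ and by the outermost factor $A_{k}^{(t)}=1+\tfrac{t}{2}\sin\alpha_{k}$ when $t$ is bounded away from $0$; so $F_{t}$ is injective for every $t\in[0,1]$.

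The one nuisance is the non-smoothness of $|\sin\alpha_{k}|$ at $\alpha_{k}\in\{0,\pi\}$, which strictly speaking yields only a continuous, piecewise-smooth isotopy; this is harmless for the topological conclusion of the lemma, and if a smooth isotopy is desired one replaces $|\sin\alpha_{k}|$ with any smooth nonnegative function vanishing at $\{0,\pi\}$ and agreeing with it outside a small neighborhood of those two points, without affecting the injectivity argument. Concatenating $F$ with the secondary shrinking deformation then exhibits the required ambient isotopy between the standard $T^{k}\subset D^{k+1}\subset D^{k+2}$ and $T^{k-1}\times\partial D^{2}\subset D^{k}\times D^{2}=D^{k+2}$, proving the lemma.
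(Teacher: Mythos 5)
Your proposal is correct and is essentially the paper's own argument: the paper's proof consists exactly of examining the explicit isotopy $F$ of equation (3), whose $t=0$ slice is $T^{k-1}\times\partial D^{2}$ (the absolute-value term drops out) and whose $t=1$ slice is the standard $T^{k}$, and your endpoint evaluation, telescoping injectivity check and smoothing remark simply supply details the paper omits. Your second ``flattening'' stage is not actually needed: since the lemma's chain $D^{k+1}=D^{k}\times D^{1}\subseteq D^{k}\times S^{1}$ puts the last coordinate of $T^{k}$ on an arc of $\partial D^{2}$ rather than on the flat diameter, the $t=1$ slice of (3), with last two coordinates $\bigl(\tfrac{1}{2^{k-1}}\cos\alpha_{k},\tfrac{1}{2^{k-1}}|\sin\alpha_{k}|\bigr)$, is already the torus named in the lemma, though keeping the extra stage is harmless because isotopy is transitive.
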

\begin{proof}
While $t=0$, $F(T^{k})$ is the product $T^{k-1}\times(\partial D^{2})\subseteq D^{k}\times D^{2}$, where $T^{k-1}$ is the standard torus in $D^{k}$ and $D^{2}$ is a disk of radius $\frac{1}{2^{k}}$. While $t=1$, the last two coordinates in (3) are $(\frac{1}{2^{k}}\cos(\frac{1}{2^{k}}\pi\cos\alpha_{k}),\frac{1}{2^{k}}\sin(\frac{1}{2^{k}}\pi\cos\alpha_{k}))$. Via the morphism:
$$F=\frac{1}{2^{k}}e^{i\pi t}:(-1,1)\longrightarrow S^{1}-\{-\frac{1}{2^{k}}\},$$
the point $(\frac{1}{2^{k}}\cos(\frac{1}{2^{k}}\pi\cos\alpha_{k}),\frac{1}{2^{k}}\sin(\frac{1}{2^{k}}\pi\cos\alpha_{k}))$ can be expressed as $\frac{1}{2^{k}}\cos\alpha_{k}$ in $(-1,1)$. In this way, $T^{k}$ can be viewed as the standard torus in $D^{k+1}=D^{k}\times D^{1}\subseteq D^{k}\times S^{1}\subseteq D^{k}\times D^{2}$.
\end{proof}

\subsection{Construction of the Isotopy}
Now we can construct an isotopy of torus $T^{m-n}_{\widehat{\sigma}}\times\{0\}$ in $\mathcal {Z}$:

In $D_{\sigma}^{2n}\times T_{\widehat{\sigma}}^{m-n}$, choose a path in $D_{\sigma}^{2n}$ to connect the original point with the point $(y,*)$. Then we can construct an isotopy to move the torus $\{0\}\times T^{m-n}_{\widehat{\sigma}}\times\{0\}$ to $\{y\}\times T^{m-n}_{\widehat{\sigma}}\times\{*\}$ along the path.

As $n\geq 1$, the torus
\[S^{1}_{0}\times T^{m-n}_{\widehat{\sigma}}\subseteq \mathcal {Z}=\bigcup \limits _{\sigma \in K_{P}}D_{\sigma}^{2n}\times
T_{\widehat{\sigma}}^{m-n}\subset (D^{2})^{m}.\]
Assume the facet $\sigma_{1}$ contains vertex 1, so
\[S^{1}_{0}\times T^{m-n}_{\widehat{\sigma}}\subseteq S^{1}_{0}\times D^{2}_{1}\times S^{1}_{2}\times\ldots S^{1}_{m-n}
\subseteq D^{2n}_{\sigma_{1}}\times T_{\widehat{\sigma_{1}}}^{m-n}\subseteq \mathcal {Z}.\]
For $\forall$ $x\in S^{1}_{2}\times\ldots\times S^{1}_{m-n}$, we construct an isotopy of
$\{y\}\times S^{1}_{1}\times\{x\}$ in $S^{1}_{0}\times D^{2}_{1}\times\{x\}$
as follows:
The coordinate of a point of $\{y\}\times S_1^1$ in $A_{0}\times D_1^2 \subset S_{0}^1\times D_1^2$ is
$(1,e^{i\alpha})(0\leq\alpha< 2\pi)$. Define an isotopy:
$$F_{1}: S^{1}\times I  \longrightarrow A_{0}\times D^{2}_{1}$$
$$F_{1}(e^{i\alpha},t)=(e^{i\frac{1}{2}\pi t\sin\alpha},(1-t)e^{i\alpha}+te^{i\frac{1}{2}\pi\cos\alpha})$$
when $t=1$, $F_{1}(e^{i\alpha},1)=(e^{i\frac{1}{2}\pi\sin\alpha},e^{i\frac{1}{2}\pi\cos\alpha})$. Via the morphism:
$$F=e^{i\pi t}:(-1,1)\rightarrow A_{0}(A_{1}),$$
$F^{-1}(e^{i\frac{1}{2}\pi\sin\alpha},e^{i\frac{1}{2}\pi\cos\alpha})$ is the circle $(\frac{1}{2}\sin\alpha,\frac{1}{2}\cos\alpha)$ in $(-1,1)\times(-1,1)$, which is the standard torus in $\mathds{R}^{2}$.

In this way, for $\forall$ $x\in S^{1}_{2}\times\cdots\times S^{1}_{m-n}$, we
move $\{y\}\times S^{1}_{1}\times\{x\}$ to the regular embedding
\[
S^{1}\subset A_{0}\times A_{1}\subset S^{1}_{0}\times S^{1}_{1}\times\{x\}.
\]
By this way, we can construct an equivariant isotopy to move $\{y\}\times S^{1}_{1}\times S_2^1\times\ldots\times S^{1}_{m-n}$
into
\[
 A_{0}\times A_{1}\times (S_2^1\times\cdots \times S_{m-n}^1)\subset S^{1}_{0}\times S^{1}_{1}\times (S_2^1\times \cdots\times S_{m-n}^1).
\]

\begin{figure}[htbp]
  \centering\includegraphics[width=0.38\textwidth]{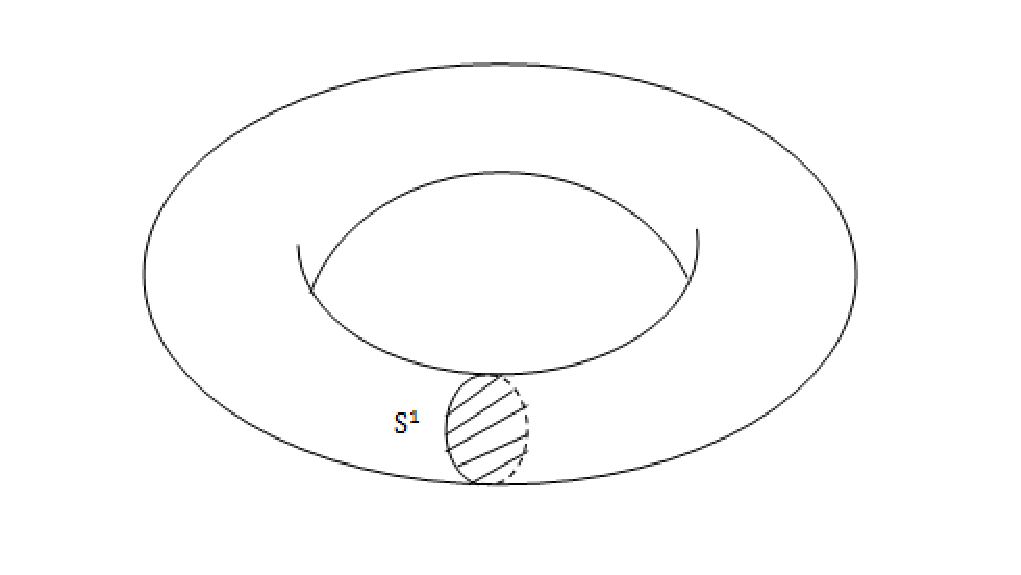}
  \caption{before the isotopy}\label{fig:digit}
  \includegraphics[width=0.35\textwidth]{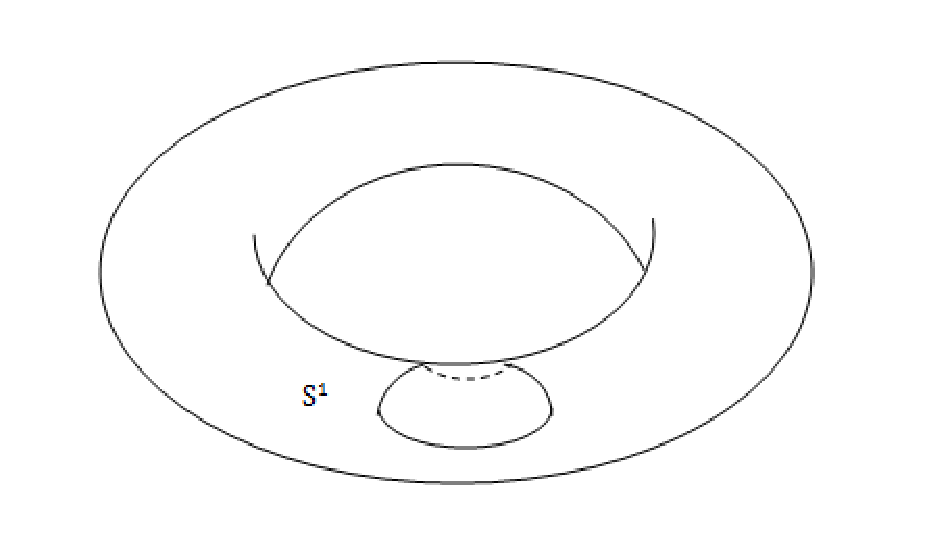}
  \caption{after the isotopy}\label{fig:digit}
\end{figure}

 Inductively suppose we have constructed an isotopy of
$\{y\}\times S^{1}_{1}\times\cdots\times S^{1}_{p}\times\{x\}$ to move it to the regular embedding
\[
T^{p}\subseteq A^{p+1}\subseteq S^{1}_{0}\times S^{1}_{1}\times\cdots\times S^{1}_{p}\times\{x\}
\]
where $x$ is a point of $S^{1}_{p+1}\times\cdots\times S^{1}_{m-n}$ and the coordinate
of the points of $T^p\subset D^{p+1}$ is expressed as (1).
Assume the facet $\sigma_{p+1}$ contains vertex $p+1$, so
\[
S^{1}_{0}\times S^{1}_{1}\times\cdots\times S^{1}_{p}\times D^{2}_{p+1}\times\{\pi(x)\}
\subset D^{2n}_{\sigma_{p+1}}\times T_{\widehat{\sigma_{p+1}}}^{m-n}\subseteq \mathcal {Z},\]
where $\pi$ is the projection
\[
\pi:S^{1}_{p+1}\times S_{p+2}^1\cdots\times S^{1}_{m-n}\rightarrow S^{1}_{p+2}\times\cdots\times S^{1}_{m-n}.
\]
Using Lemma 2.2 above, we can construct an isotopy to move the torus
\[
T^{p}(\subseteq A^{p+1}\approx (-1,1)^{p+1})\times S^{1}_{p+1}(\subseteq D^{2}_{p+1})\times\{\pi(x)\}
\]
 to the regular embedding
\[T^{p+1}\subseteq A^{p+2}\subseteq S^{1}_{0}\times S^{1}_{1}\times\ldots\times S^{1}_{p+1}\times\{\pi(x)\}.\]
So we can construct an equivariant isotropy of $T^{p}(\subseteq A^{p+1})\times S^{1}_{p+1}\times\cdots\times S^{1}_{m-n}$ in $D^{2n}_{\sigma_{p+1}}\times T_{\widehat{\sigma_{p+1}}}^{m-n}$, which moves the product component $T^{p}\times S^{1}_{p+1}$ to the regular embedding $T^{p+1}\subseteq A^{p+2}\subseteq S^{1}_{0}\times S^{1}_{1}\times\cdots\times S^{1}_{p+1}$.
By induction, we can construct an isotopy $F_{t}$ of $T^{m-n}_{\widehat{\sigma}}\times\{0\}\subseteq \mathcal {Z}$ to move it to
the regular embedding $T^{m-n}\subseteq D^{m-n+1}\subseteq D^{m+n}\subseteq \mathcal {Z}$. According to the isotopy extension theorem,
there exists an isotopy $G_{t}$ of $\mathcal {Z}$ satisfying $G_{t}\mid T^{m-n}_{\widehat{\sigma}}\times\{0\}=F_{t}$.
So the tubular neighborhood $T^{m-n}_{\widehat{\sigma}}\times D^{2n}$ of $T^{m-n}_{\widehat{\sigma}}\times\{0\}$
in $\mathcal {Z}$ is isotopic to a tubular neighborhood $N(T^{m-n})$ of $T^{m-n}$ ($T^{m-n}$ is the regular embedding
$T^{m-n}\subseteq D^{m+n}\subseteq \mathcal {Z}$). By Theorem 3.5 in \cite{[Ko]}, $N(T^{m-n})$ can be chosen to be the regular embedding
$T^{m-n}\times D^{m+n}\subseteq D^{m+n}\subseteq \mathcal {Z}$. So
$$\mathcal {Z}-T^{m-n}_{\widehat{\sigma}}\times D^{2n}\simeq \mathcal {Z}\# (S^{m+n}-T^{m-n}\times D^{2n}),$$
$$\mathcal {Z}_{v}\simeq\partial[(\mathcal {Z}\#(S^{m+n}-T^{m-n}\times D^{2n}))\times D^{2}].$$
Recall Lemma 2 in \cite{[GL]}:
\begin{lem}(Lemma 2 \cite{[GL]})
Let $M, N$ be connected $n$-manifolds, if $M$ is closed but $N$ has non-empty boundary, then
$\partial[(M\# N)\times D^{2}]$ is diffeomorphic to
$\partial[(M-D^{n})\times D^{2}]\# \partial(N \times D^{2})$.
\end{lem}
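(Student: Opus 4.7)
My plan is to establish the diffeomorphism by decomposing both sides into the same collection of natural pieces glued along common boundary submanifolds.

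First, compute the left-hand side. Writing $(M\sharp N)\times D^2 = [(M-\mathrm{int}(D^n))\times D^2] \cup_{S^{n-1}\times D^2} [(N-\mathrm{int}(D^n))\times D^2]$ and applying the formula $\partial(A \cup_W B) = (\partial A \setminus \mathrm{int}\,W) \cup_{\partial W} (\partial B \setminus \mathrm{int}\,W)$, one obtains
\[
\partial[(M\sharp N)\times D^2] = [(M-\mathrm{int}(D^n))\times S^1] \cup_{S^{n-1}\times S^1} [\partial(N\times D^2) - \mathrm{int}(D^n \times S^1)],
\]
where $D^n\times S^1 \subset N \times S^1 \subset \partial(N\times D^2)$ is the $S^1$-thickening of the disk $D^n\subset N$ removed in forming $M\sharp N$.

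Next, decompose each factor of the right-hand side:
\begin{align*}
\partial[(M-\mathrm{int}(D^n))\times D^2] &= [(M-\mathrm{int}(D^n))\times S^1] \cup_{S^{n-1}\times S^1} [S^{n-1}\times D^2],\\
\partial(N\times D^2) &= [\partial(N\times D^2) - \mathrm{int}(D^n\times S^1)] \cup_{S^{n-1}\times S^1} [D^n\times S^1].
\end{align*}
The key geometric fact is that $S^{n-1}\times D^2$ and $D^n\times S^1$ are the complementary halves of the standard Heegaard-type splitting $\partial(D^n\times D^2) = S^{n+1}$. To form the connected sum, I pick disks $D^{n+1}_1\subset S^{n-1}\times D^2$ and $D^{n+1}_2\subset D^n\times S^1$ inside these ``extra'' pieces and glue along $S^n = \partial D^{n+1}_1 = \partial D^{n+1}_2$.

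The main obstacle is to verify that the connected-sum construction realizes the same direct gluing along $S^{n-1}\times S^1$ that appears in the first computation. The region $(S^{n-1}\times D^2 - \mathrm{int}\,D^{n+1}_1) \cup_{S^n} (D^n\times S^1 - \mathrm{int}\,D^{n+1}_2)$ created by the $\sharp$-operation is not by itself a collar on $S^{n-1}\times S^1$; however, it sits inside the ambient closed manifold between the two ``main'' pieces, and by a careful isotopy argument, using the structure of the Heegaard-split $S^{n+1}$ together with handle slides inside the adjacent $(M-\mathrm{int}(D^n))\times S^1$ and $\partial(N\times D^2) - \mathrm{int}(D^n\times S^1)$, one can collapse it to an effective collar, identifying both sides of the lemma.
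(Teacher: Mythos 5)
You are asking me to compare against the paper's own proof, but note the paper gives none: this lemma is quoted verbatim from \cite{[GL]}, so your argument has to stand on its own, and it does not. Your two decompositions are correct, as is the observation that $S^{n-1}\times D^{2}\cup_{S^{n-1}\times S^{1}}D^{n}\times S^{1}=\partial(D^{n}\times D^{2})=S^{n+1}$, and placing the connected-sum balls inside $S^{n-1}\times D^{2}$ and $D^{n}\times S^{1}$ is a reasonable set-up. But the entire content of the lemma is then packed into the final sentence, where you assert that the region $W=(S^{n-1}\times D^{2}-\mathrm{int}\,D^{n+1})\cup_{S^{n}}(D^{n}\times S^{1}-\mathrm{int}\,D^{n+1})$ can be ``collapsed to an effective collar'' by isotopies and handle slides; that is a restatement of what must be proved, not a proof. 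Internally $W$ is the interior connected sum $(S^{n-1}\times D^{2})\sharp(D^{n}\times S^{1})\simeq S^{n-1}\vee S^{1}\vee S^{n}$, which is not homotopy equivalent to $S^{n-1}\times S^{1}\times I$, so no manipulation inside $W$ yields a collar; everything hinges on absorbing $W$ into the neighbouring pieces, and that is exactly the open point. The decisive symptom that an idea is missing: nothing in your outline uses the hypothesis $\partial N\neq\emptyset$ (your steps hold verbatim for closed $N$), yet the statement is false for closed $N$. Take $M=N=T^{2}$, $n=2$: the left side is $\Sigma_{2}\times S^{1}$, whose fundamental group $\pi_{1}(\Sigma_{2})\times\mathbb{Z}$ has infinite center and is freely indecomposable, while the right side is $(S^{1}\times S^{2})\,\sharp\,(S^{1}\times S^{2})\,\sharp\,(T^{2}\times S^{1})$, whose fundamental group is a nontrivial free product. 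Hence no argument of the advertised, hypothesis-blind kind can close the gap; any completion must use $\partial N\neq\emptyset$, and your sketch gives no indication of where it would enter.

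A short correct route (essentially the one in \cite{[GL]}) uses the boundary of $N$ at the very first step. Since $N$ is connected with $\partial N\neq\emptyset$, isotope the ball in $N$ used for the interior connected sum into a collar of $\partial N$; cutting it out and gluing in $M-\mathrm{int}(D^{n})$ then amounts to a boundary connected sum, so $M\sharp N\cong\bigl(M-\mathrm{int}(D^{n})\bigr)\natural N$, glued along an $(n-1)$-disk in $S^{n-1}=\partial\bigl(M-\mathrm{int}(D^{n})\bigr)$ and one in $\partial N$. Multiplying by $D^{2}$ turns the gluing disk into the $(n+1)$-disk $D^{n-1}\times D^{2}$ lying in the boundaries of the thickenings, so $(M\sharp N)\times D^{2}\cong\bigl[(M-\mathrm{int}(D^{n}))\times D^{2}\bigr]\natural\,(N\times D^{2})$, and the standard fact $\partial(A\natural B)\cong\partial A\,\sharp\,\partial B$ gives the lemma. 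That conversion of the interior sum into a boundary sum is the essential use of $\partial N\neq\emptyset$ which your proposal lacks; without it, the ``middle region'' problem you correctly identified cannot be resolved.
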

According to the lemma, $\partial[(\mathcal {Z}\#(S^{m+n}-T^{m-n}\times D^{2n}))\times D^{2}]$ is diffeomorphic to
\[\partial[(\mathcal {Z}-D^{m+n})\times D^{2}]\# \partial[(S^{m+n}-T^{m-n}\times D^{2n})\times D^{2}],\]
where torus $T^{m-n}\subseteq D^{m-n+1}\subseteq D^{m+n}\subseteq S^{m+n}$ is the regular embedding. \\

\section{The manifold $\partial[(S^{m+n}-T^{m-n}\times D^{2n})\times D^{2}]$}
 In this section, we will prove Proposition 1.2 by induction:
$$\partial[(S^{m+n}-T^{m-n}\times D^{2n})\times D^{2}]$$
is diffeomorphic to $\mathop{\#} \limits_{j=1}^{m-n} \binom{m-n}{j} (S^{j+2}\times S^{m+n-j-1})$, where $T^{m-n}\times D^{2n}\subseteq S^{m+n}$ is the regular embedding.
\begin{proof}
While $m-n=1$, the manifold $\partial[(S^{m+n}-T^{m-n}\times D^{2n})\times D^{2}]=\partial[(S^{2n+1}-S^{1}\times D^{2n})\times D^{2}]\simeq\partial(S^{2n-1}\times D^{4})\simeq S^{2n-1}\times S^{3}$.
Inductively suppose that we have proved $\partial[(S^{2n+k}-T^{k}\times D^{2n})\times D^{2}]$ is diffeomorphic to
$\mathop{\#} \limits_{j=1}^{k} \binom{k}{j} (S^{j+2}\times S^{2n+k-j-1})$.
Since $T^{k+1}\times D^{2n}\subseteq S^{2n+k+1}$ is the regular embedding, $T^{k+1}\times D^{2n}=S^{1}\times (T^{k}\times D^{2n})\subseteq S^{1}\times D^{2n+k}\subseteq S^{1}\times D^{2n+k}\cup D^{2}\times S^{2n+k-1}\approx S^{2n+k+1}$, where $T^{k}\times D^{2n}\subseteq D^{2n+k}$ is the regular embedding.
So the manifold $\partial[(S^{2n+k+1}-T^{k+1}\times D^{2n})\times D^{2}]$ is diffeomorphic to
\begin{align*}
\quad &\partial[(S^{1}\times D^{2n+k}\cup D^{2}\times S^{2n+k-1}-T^{k+1}\times D^{2n})\times D^{2}]\\
\simeq \quad &\partial[((S^{1}\times S^{2n+k}-S^{1}\times D^{2n+k}) \cup D^{2}\times S^{2n+k-1}-T^{k+1}\times D^{2n})\times D^{2}]\\
\simeq \quad&\partial[((S^{1}\times(S^{2n+k}-T^{k}\times D^{2n})-S^{1}\times D^{2n+k})\cup D^{2}\times S^{2n+k-1})\times D^{2}]\\
\simeq \quad&(\partial[S^{1}\times(S^{2n+k}-T^{k}\times D^{2n})\times D^{2}]-S^{1}\times D^{2n+k}\times S^{1})\cup D^{2}\times S^{2n+k-1}\times S^{1}\\
\simeq \quad&(S^{1}\times (\mathop{\#} \limits_{j=1}^{k} \binom{k}{j} (S^{j+2}\times S^{2n+k-j-1}))-S^{1}\times D^{2n+k}\times S^{1})\cup D^{2}\times
S^{2n+k-1}\times S^{1}\\
\simeq \quad&\partial[(\mathop{\#} \limits_{j=1}^{k} \binom{k}{j} (S^{j+2}\times S^{2n+k-j-1})-D^{2n+k}\times S^{1})\times D^{2}]\\
\simeq \quad&\partial[(\mathop{\#} \limits_{j=1}^{k} \binom{k}{j} (S^{j+2}\times S^{2n+k-j-1})\#(S^{2n+k+1}-D^{2n+k}\times S^{1}))\times D^{2}]\\
\simeq \quad&\partial[(\mathop{\#} \limits_{j=1}^{k} \binom{k}{j} (S^{j+2}\times S^{2n+k-j-1})\# S^{2n+k-1}\times D^{2})\times D^{2}]\\
\end{align*}

Recall Lemma 1 in \cite{[GL]}:
\begin{lem}(Lemma 1 \cite{[GL]})

1. If $M$ and $N$ are connected and closed n-manifolds, then $\partial[(M\# N-D^{n})\times D^{2}]$ is diffeomorphic to
$\partial[(M-D^{n})\times D^{2}]\# \partial[(N-D^{n})\times D^{2}]$.

2. $\partial[(S^{p}\times S^{q}-D^{p+q})\times D^{2}]= S^{p}\times S^{q+1}\# S^{p+1}\times S^{q}$.
\end{lem}

Using Lemma 2.3 and Lemma 3.1, $\partial[(\mathop{\#} \limits_{j=1}^{k} \binom{k}{j} (S^{j+2}\times S^{2n+k-j-1})\# S^{2n+k-1}\times D^{2})\times D^{2}]$ is diffeomorphic to

\begin{align*}
\quad &\partial[(\mathop{\#}\limits_{j=1}^{k} \binom{k}{j} (S^{j+2}\times S^{2n+k-j-1})-D^{2n+k+1})\times D^{2}]\# \partial[S^{2n+k-1}\times D^{2}\times D^{2}]\\
\simeq \quad &\mathop{\#}\limits_{j=1}^{k} \binom{k}{j} (S^{j+2}\times S^{2n+k-j})\# \mathop{\#}\limits_{j=1}^{k} \binom{k}{j} (S^{j+3}\times S^{2n+k-j-1})\# \partial[S^{2n+k-1}\times D^{2}\times D^{2}]\\
\simeq \quad &\mathop{\#}\limits_{j=1}^{k+1} \binom{k+1}{j} (S^{j+2}\times S^{2n+k-j})
\end{align*}

\end{proof}
\begin{rem}
We can define the generalized moment-angle manifold $\mathcal {Z}_{K,k}$ corresponding to the simplicial sphere $K$ by replacing the pair $(D^{2}, S^{1})$ with $(D^{k+1}, S^{k})$, All methods in this paper can be applied to the case of $(D^{k+1}, S^{k})$ and similar results will be established. Here we give the theorem analogous to Theorem 1.5 :
\begin{thm}
If $P$ is a simple polytope, $P_{v}$ is the simple polytope obtained by cutting off the vertex $v$ of $P$, then the generalized moment-angle manifold $\mathcal {Z}_{P_{v},k}$ corresponding to $P_{v}$ is diffeomorphic to
\[
\partial[(\mathcal {Z}_{P,k}-D^{n+km})\times D^{k+1}]\# \mathop{\#} \limits_{j=1}^{m-n} \binom{m-n}{j} (S^{k(j+1)+1}\times S^{k(m-j)+n-1}).
\]
\end{thm}
\end{rem}
Until now, we have proved the conjecture. In a subsequent paper we will discuss the more general problems:
the topology of the moment-angle manifold corresponding to the connected sums, bistellar moves and cutting off high dimensional faces.

\end{document}